\newtheorem{theorem}{Theorem}[section]
\newtheorem{lemma}[theorem]{Lemma} 
\newtheorem{proposition}[theorem]{Proposition}
\newtheorem{definition}[theorem]{Definition}
\newtheorem{remark}[theorem]{Remark}
\newcommand{\Z}{\mathbb{Z}}
\newcommand{\Or}{\mathcal{O}}
\title{Kasteleyn cokernels and perfect matchings on planar bipartite graphs}
\author{Libby Taylor}
\begin{document}

\begin{abstract}

The determinant method of Kasteleyn gives a method of computing the number of perfect matchings of a planar bipartite graph.  In addition, results of Bernardi exhibit a bijection between spanning trees of a planar bipartite graph and elements of its Jacobian.  In this paper, we explore an analogue of Bernardi's results, providing a canonical simply transitive group action of the Kasteleyn cokernel of a planar bipartite graph on its set of perfect matchings, when the planar bipartite graph in question is of the form $G^+$, as defined by Kenyon, Propp and Wilson.  

\end{abstract}

\maketitle

\section{Introduction}

In general, counting matchings of a bipartite graph is a $\#P$-complete problem; this was proved by Valiant in ~\cite{valiant}.  In the case that the graph is planar, however, Kasteleyn's theorem gives a method of enumerating matchings in polynomial time.  The enumeration involves calculating the determinant of a certain signed adjacency matrix, called the \emph{Kasteleyn matrix} of the graph; equivalently, the number of perfect matchings is equal to the order of the \emph{Kasteleyn cokernel}, which is defined as the cokernel of the Kasteleyn matrix.  Kuperberg discusses in ~\cite{kuperberg} the possibility of a natural bijection between the matchings of the graph and the elements of its Kasteleyn cokernel.  He suggests that it may be better to consider a quantum bijection between these two sets, that is, a unitary isomorphism between their formal complex linear spans.  In the case that the graph is of the form $G^+$, as defined in ~\cite{KPW}, we prove that the situation is simpler: there is a canonical simply transitive group action of the Kasteleyn cokernel on the set of perfect matchings.  Our description is analogous to certain families of combinatorial bijections between the spanning trees of a planar graph and elements of its Jacobian.  


\section{Background}

All graphs in this paper will be assumed to be finite and connected, possibly with multi-edges but without self-loops.  

\smallskip

Let $G$ be a directed graph on $n$ vertices.  Its $n \times n$ signed adjacency matrix $A$ is defined to to have its $(i,j)$-th entry in $A$ equal to $1$ if there is a directed edge from $v_i$ to $v_j$; $-1$ if there is a directed edge from $v_j$ to $v_i$; and $0$ if no edge exists between the two vertices.  If there are multiple edges between $v_i$ and $v_j$, then the matrix entry is equal to the number of edges oriented $v_i$ to $v_j$ minus the number of edges oriented $v_j$ to $v_i$.

\smallskip

A \emph{divisor} on a graph $G=(V,E)$ is a function $D: V\to \Z$.  The set of all divisors of a graph $G$ is denoted $\text{Div}(G)$.  Any $d\in \text{Div}(G)$ can be written as $d=\Sigma_{v\in V} a_v(v)$, for $a_v\in \Z$.  The \emph{\text{deg}ree} of the \text{div}isor $d$ is defined as $\text{deg}(d)=\Sigma_{v\in V} a_v$.  The set of all \text{div}isors of \text{deg}ree $k$ is denoted $\text{Div}^k(G)$.  There is a (non-canonical) map from $\text{Div}^k(G)\to \text{Div}^0(G)$ defined by $d\mapsto d-d_0$ for some fixed reference \text{div}isor $d_0\in \text{Div}^k(G)$.  

\smallskip

The \emph{Laplacian operator} on a graph $G$ is denoted $\Delta :Z(G)\to \text{Div}(G)$, where $Z(G)$ is the set of integer-valued functions on the set $V(G)$.  Whenever $f\in Z(G)$, the Laplacian operator is defined by $\Delta (f)=\Sigma_{v\in V(G)}\Delta_v (f)(v)$, where
\[
\Delta_v(f)=\Sigma_{(v,w)\in E(G)}(f(v)-f(w)).
\]
The group \text{Prin}$(G)$ of \text{prin}cipal \text{div}isors is the image of the Laplacian operator.  It is obvious that \text{Prin}$(G)\subseteq \text{Div}^0(G)$.  Both \text{Prin}$(G)$ and \text{Div}$^0(G)$ are free abelian groups of rank $n-1$,  so
\[
\text{Jac(}G)=\text{Div}^0(G)/\text{Prin}(G)
\]
is a finite group called the \textit{Jacobian} of $G$.

\subsection{Generalized Temperley Bijection}

In ~\cite{KPW}, Kenyon, Propp, and Wilson create a method for obtaining a planar bipartite graph from an arbitrary planar graph and exhibit a bijection, called the Temperley bijection, between the spanning trees of the original graph and the matchings of the new bipartite graph.  This section is a summary of this method.  Throughout, we denote the starting planar graph as $G$ and the resulting bipartite graph as $G^+$.  

\smallskip


Fix an embedding of $G$ in the plane (this process does depend on the chosen embedding).  Choose a vertex of $G$ which is adjacent to the infinite face of $G$ with respect to this embedding and call it $q$.

\smallskip

Overlay $G$ with its planar dual $G^\vee$ in the plane and denote as $q^*$ the vertex of $G^\vee$ corresponding to the infinite face of $G$.  At each intersection between an edge of $G$ and an edge of $G^\vee$, add a vertex in order to create a bipartite graph.  To complete the construction, delete $q$, $q^*$, and all the edges incident to either $q$ or $q^*$.  The resulting graph is called $G^+$.  The vertices of $G^+$ are partitioned into white and black.  The white vertices are those corresponding to edges of $G$ and the black vertices are those corresponding to vertices of either $G$ or $G^\vee$.  Each edge in $G^+$ is a half-edge in either $G$ or $G^\vee$.  See Figures ~\ref{G}, ~\ref{GUGV}, and ~\ref{G+} for an example of this method.
\begin{figure}[!htb] \includegraphics[scale=.5]{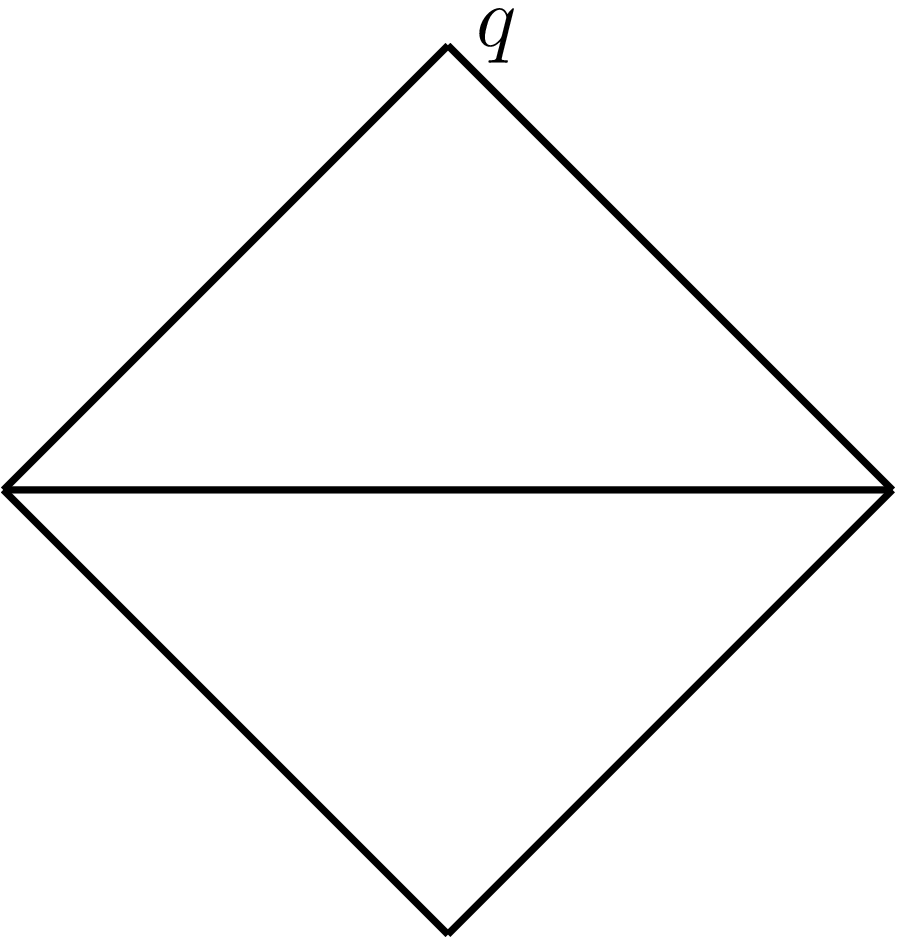} \caption{$G$} \label{G} \end{figure}
\begin{figure}[!htb] \includegraphics[scale=.5]{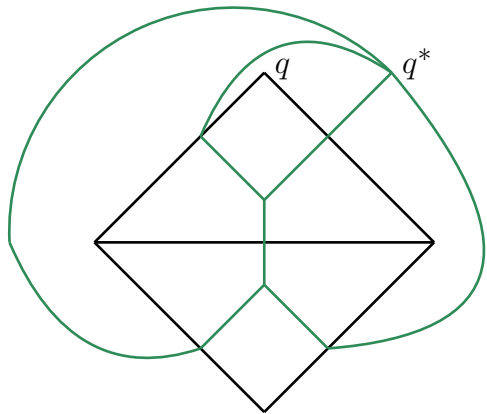} \caption{$G \cup G^\vee$}\label{GUGV} \end{figure}
\begin{figure}[!htb] \includegraphics[scale=.5]{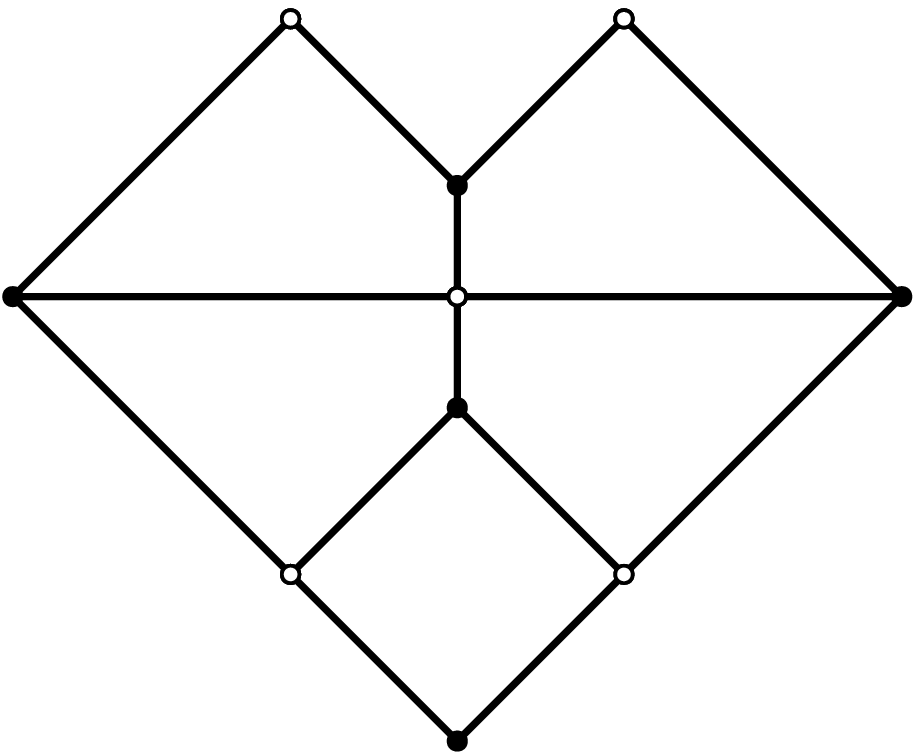} \caption{$G^+$} \label{G+} \end{figure}

Kenyon, Propp and Wilson in ~\cite{KPW} produce a bijection between the set of spanning trees of $G$ and the set of perfect matchings of $G^+$, denoted $T(G)$ and $M(G^+)$ respectively.  The starting information for this bijection is a spanning tree $T$ of $G$ and the root vertex $q$.  One then constructs a $q$-connected orientation of $G$ associated to $T$.  (By a $q$-connected orientation, we mean that for each $v\in V(G)$, there exists a directed path from $q$ to $v$.)  This orientation is constructed by first orienting all edges $e\in T$ away from $q$.  Then each $e\in T^c$ is oriented counterclockwise with respect to its fundamental cycle in $T$.  For any spanning tree $T_i\in T(G)$, we denote the associated $q$-connected orientation of $G$ by $O_i$.  It will turn out that every $q$-connected orientation of $G$ arises in this way from some spanning tree (see ~\cite{bernardi}), so the set of $q$-connected orientations of $G$ is in bijection with the set of spanning trees of $G$.  

\smallskip

Throughout, edges in $G^+$ which are oriented white $\to$ black will be referred to as positively oriented and edges oriented black $\to$ white as negatively oriented.

\smallskip

Now we can construct a matching in $M(G^+)$ from an element of $T(G)$; this construction will produce the desired bijection.  Start with some spanning tree of $G$, which we will call $T_0$, and the $q$-connected orientation $O_0$ constructed from $T_0$.  Orient the edges of $G^\vee$ counterclockwise from the orientation (in $O_0$) of the edge they intersect in $G$.  For each white vertex $v_w$ corresponding to some edge in $T_0$, choose the positively oriented half-edge of $G$ incident to $v_w$ and add it to $M(G^+)$.  For each $v_w$ corresponding to some edge in the complement of $T_0$, choose the positively oriented half-edge of $G^\vee$ incident to $v_w$ and add it to $M(G^+)$.  See Figures ~\ref{T_0} and ~\ref{M_0} for an example.  

\begin{remark}

The orientation of $G^+$ constructed as described above will turn out to be a Kasteleyn orientation; these orientations will be defined in Section 2.3.

\end{remark}

\begin{figure}[!htb] \includegraphics[scale=.5]{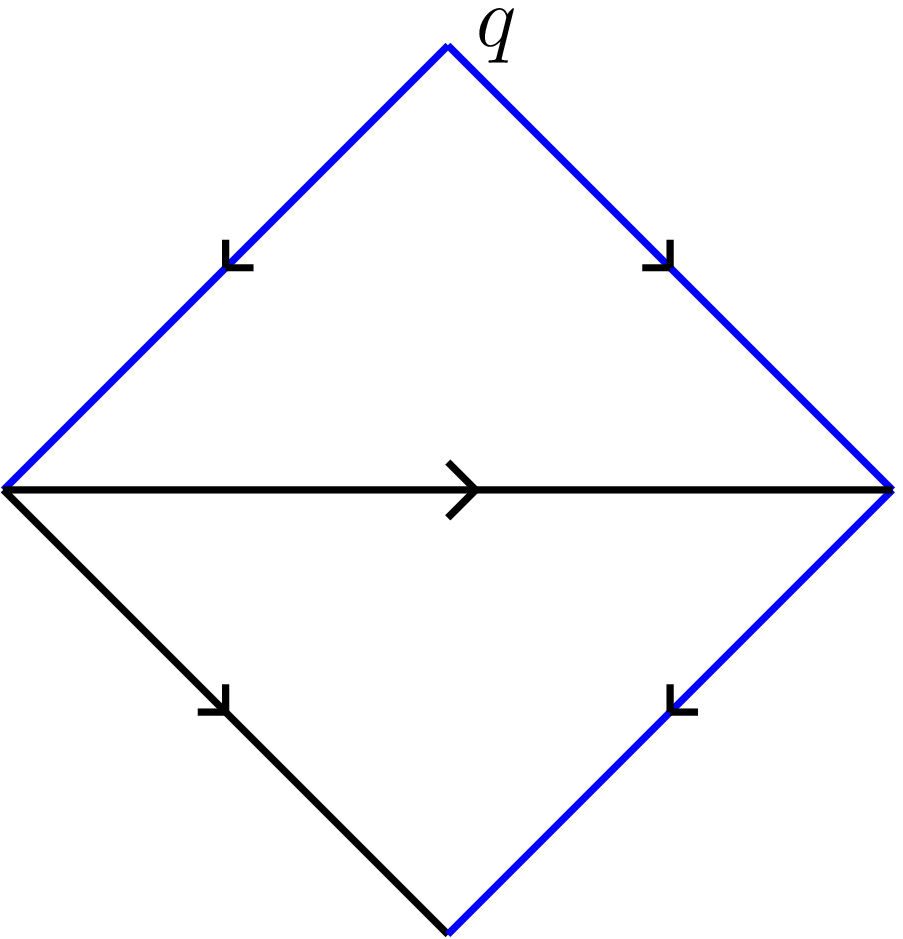} \caption{$T_0$} \label{T_0} \end{figure}
\begin{figure}[!htb]\includegraphics[scale=.5]{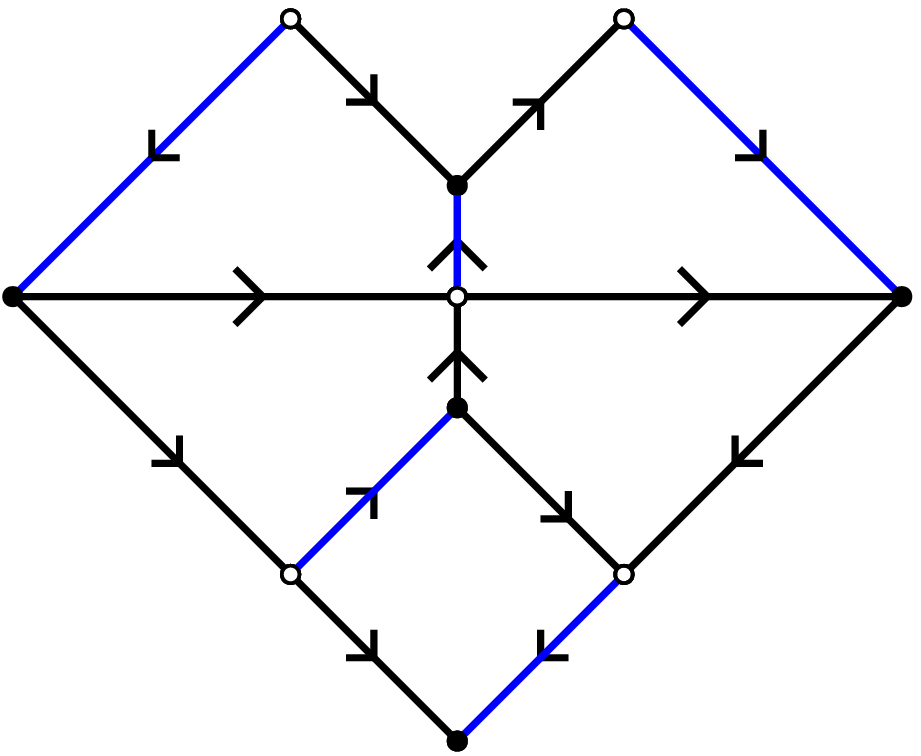} \caption{$M_0$} \label{M_0} \end{figure}

This construction gives rise to a bijection between spanning trees of $G$ and perfect matchings of $G^+$, provided that $q$ is incident to $q^*$ (by which we mean that $q$ is adjacent to the infinite face).  If $q$ is not incident to $q^*$, then the set $M(G^+)$ may be strictly larger than the set $T(G)$, in which case this map produces an injection $T(G)\to M(G^+)$.  Throughout, we assume that $q$ and $q^*$ are incident.

\subsection{Orientations of Spanning Trees}

In this section, we will describe the Bernardi bijection between $T(G)$ and elements of $\text{Jac(}G)$ and show that it factors through the set of equivalence classes of orientations of $G$.  
The bijection begins with some spanning tree $T_i$ of $G$.  Then a $q$-connected orientation is constructed as described in Section 2.1.  An equivalence relation can be defined on the orientations of $G$ by considering two orientations $O_i$ and $O_j$ to be equivalent if one can be obtained from the other by a sequence of directed cycle and cut reversals.  It is clear that this is an equivalence relation.  We will denote the set of equivalence classes of orientations of $G$ by $\Or(G)$.  The following proposition is proved in ~\cite{spencer}:

\begin{proposition}\label{q-conn orientations}

Each equivalence class in $\mathcal{O}(G)$ contains a unique $q$-connected orientation.

\end{proposition}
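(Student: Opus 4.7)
The plan is to treat existence and uniqueness separately.

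For \emph{existence}, start with any representative $O$ of the equivalence class and induct on $|V \setminus R_q(O)|$, where $R_q(O)\subseteq V$ is the set of vertices reachable from $q$ via directed paths in $O$. If $R_q(O)=V$ then $O$ is already $q$-connected. Otherwise every edge of $G$ between $R_q(O)$ and $V\setminus R_q(O)$ must be oriented from $V\setminus R_q(O)$ into $R_q(O)$, since any edge pointing the other way would enlarge the reachable set; thus $(V\setminus R_q(O),\,R_q(O))$ is a directed cut of $O$, and reversing it yields an equivalent orientation with strictly larger $q$-reachable set. The induction terminates at a $q$-connected orientation in the class.

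For \emph{uniqueness}, I would track the indegree divisor $\partial(O)\in\text{Div}(G)$ given by $\partial(O)(v)=\deg^-_O(v)$. A directed cycle reversal preserves $\partial(O)$: each vertex on a directed cycle has exactly one incoming and one outgoing cycle-edge, so the reversal fixes its indegree. A directed cut reversal on $(S,V\setminus S)$ alters $\partial(O)$ by $\pm\Delta(\mathbf{1}_S)\in\text{Prin}(G)$, as a direct computation with the definition of $\Delta$ shows. Consequently the class $[\partial(O)]\in\text{Div}(G)/\text{Prin}(G)$ is an invariant of the cycle-cut equivalence relation, and uniqueness reduces to the claim that any two $q$-connected orientations whose indegree divisors differ by a principal divisor must coincide.

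This last claim is the main obstacle. My plan is to argue by contradiction: suppose $O_1\neq O_2$ are both $q$-connected and $\partial(O_1)-\partial(O_2)=\Delta(f)$ for some $f:V\to\Z$. Examining the level sets $V_k=\{v:f(v)=k\}$, the jumps of $f$ across edges force the set of edges where $O_1$ and $O_2$ disagree to decompose into a family of directed cuts between consecutive level sets, with orientation dictated by the sign of the jump. Locating $q$ in some level set and following directed paths constrained by these forced cut orientations, one extracts a directed cut with $q$ on the sink side in one of $O_1,O_2$, contradicting its $q$-connectivity unless $f$ is constant, in which case $O_1=O_2$. Making the level-set-to-cut decomposition precise and pinpointing the cut that violates $q$-connectivity is the delicate combinatorial step, and is the portion of the argument carried out in detail in~\cite{spencer}.
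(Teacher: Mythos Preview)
The paper does not supply its own proof of this proposition; it records the statement as a result of Backman~\cite{spencer} and remarks that Bernardi~\cite{bernardi} gives a bijective argument. So there is no in-paper proof to compare against, and your eventual deferral to~\cite{spencer} matches what the paper itself does.

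Your existence argument via iterated directed-cut reversals is correct and standard.

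Your uniqueness sketch, however, has a genuine gap at the last step. The implication ``$f$ constant $\Rightarrow O_1=O_2$'' does not follow: $f$ constant only yields $\partial(O_1)=\partial(O_2)$, and two distinct orientations can share an indegree divisor. Concretely, take $G=K_3$ with $q$ any vertex. The two cyclic orientations of the triangle are both $q$-connected in the paper's sense (every vertex is reachable from $q$ along the cycle), both have indegree $1$ at every vertex, and each is obtained from the other by a single directed cycle reversal---so they lie in the same class of $\mathcal{O}(G)$ yet are distinct. Your level-set argument cannot separate them, since $f\equiv 0$ from the outset and there is no cut to extract.

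This example in fact shows that the proposition is not literally true under the paper's stated definition of ``$q$-connected''; the result in~\cite{spencer} uses a sharper notion (tied to the specific orientations produced from spanning trees via the Bernardi construction, only three of which occur for $K_3$) that excludes one of the two cyclic orientations. Your invariant $[\partial(O)]$ is the right object, and your reduction correctly forces two $q$-connected representatives of a class to have linearly equivalent---indeed equal---indegree divisors, but equal indegree divisors do not force equal orientations. Closing the gap requires importing the precise hypotheses from~\cite{spencer} rather than the paper's informal paraphrase.
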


Bernardi provides a bijective proof of this fact in ~\cite{bernardi} by proving that $T(G)\longleftrightarrow \Or(G)$.  Since spanning trees of $G$ are in bijection with $q$-connected orientations, the $q$-connected orientations make a natural choice of representatives for the equivalence classes in $\Or(G)$.  

\smallskip

The bijection between $\Or(G)$ and $\text{Jac(}G)$ maps an orientation in $\Or(G)$ into $\text{Div}^{g-1}(G)$ by placing a coefficient of $\text{in\text{deg}}(v)-1$ on each vertex $v$, where $g$ denotes the combinatorial genus of $G$ and $\text{in\text{deg}}(v)$ denotes the in\text{deg}ree of $v$.  The resulting divisor has \text{deg}ree $\Sigma_{v\in V} (\text{in\text{deg}}(v)-1)=g-1$, so this does give a map $\Or(G) \to \text{Div}^{g-1}(G)$.   

\smallskip

The map from $\text{Div}^{g-1}(G)$ to $\text{Jac(}G)$ is defined by subtracting a reference divisor $d_0\in \text{Div}^{g-1}(G)$ from each $d\in \text{Div}^{g-1}(G)$.  The reference divisor is taken to be the divisor associated to some spanning tree $T_0$ of $G$, which allows $T_0$ to be considered as an ``identity element'' of $T(G)$ in this bijection.  This produces a bijection between $\text{Div}^{g-1}(G)/\text{Prin}(G)$ and $\text{Div}^0(G)/\text{Prin}(G) \cong \text{Jac(}G)$.  Thus the Bernardi map gives a combinatorially defined bijection between spanning trees of $G$ and elements of $\text{Jac}(G)$, which 
factors as $T(G) \to \Or(G) \to \text{Div}^{g-1}(G)/\text{Prin(G)} \to \text{Jac(}G)$.  Note that this bijection is not canonical, as it depends on the choice of $T_0$ and the choice of $q$.  However, Bernardi also proves that there is an associated group action of the so-called \textit{break divisors} of $G$ on the set of spanning trees, and the break divisors are in bijection with $q$-connected orientations.  Yuen proves in ~\cite{chiho} that this group action is independent of the reference tree, and depends only on the choice of $q$.  

\smallskip

By abuse of terminology, we will also refer to the truncated map $\Or(G) \to \text{Div}^{g-1}(G)/\text{Prin}(G) \to \text{Jac(}G)$ from equivalence classes of orientations to the Jacobian of $G$ as the Bernardi bijection.  

\subsection{Kasteleyn Cokernels and Jacobians}

\medskip

The Kasteleyn cokernel is closely related to the Kasteleyn orientations of a planar bipartite graph.  These objects arise as analogues of $\text{Jac(}G)$ and $q$-connected orientations of $G$, respectively.

\begin{definition}

A Kasteleyn orientation on a planar bipartite graph $G$ is an orientation of $G$ such that every cycle in the graph has an odd number of clockwise-oriented edges.

\end{definition}

(Note that this definition is dependent on the drawing of $G$ in the plane.)  The condition for an orientation to be Kasteleyn is equivalent to having an odd number of positively oriented edges in every cycle with length $\ell\equiv 0\pmod 4$ and an even number of positively oriented edges in every cycle with length $\ell\equiv 2\pmod 4$.  

\smallskip

A $q$-connected orientation on $G$ gives rise to a Kasteleyn orientation on $G^+$ using the same method as in the Temperley bijection.  First recall that all edges of $G^+$ are either half-edges of $G$ or half-edges of $G^\vee$.   Orient each half-edge of $G$ the same way as in the original $q$-connected orientation of $G$ and orient each half-edge of $G^\vee$ counterclockwise from its corresponding edge in $G$.  (This orientation was used in the Temperley bijection to produce a perfect matching on $G^+$ from a spanning tree on $G$.)  Figure ~\ref{M_0} shows the matching and orientation coming from $T_0$, and one can verify that the orientation induced on $G^+$ is a Kasteleyn orientation.  

\smallskip

The signed bipartite adjacency matrix of a graph is constructed with white vertices indexing the columns and black vertices indexing the rows.  A signed bipartite adjacency matrix arising from a Kasteleyn signing is called a Kasteleyn matrix.  The Kasteleyn cokernel is defined as follows.

\begin{definition}

The Kasteleyn cokernel of a planar bipartite graph $H$ is the finite abelian group $K(H)=\text{Div}(H)/\text{Prin}(H)$, where $\text{Div}(H)$ is the free abelian group on the white vertices and $\text{Prin}(H)$ is the column span of a Kasteleyn matrix of $H$.  

\end{definition}

In general, Kasteleyn signings are far from being unique.  However, the Kasteleyn cokernel is independent of the Kasteleyn signing chosen; see ~\cite{kuperberg}.

\smallskip

Jacobson proves the following theorem in ~\cite{jacobson}, which was originally conjectured by Kuperberg:

\begin{theorem}\label{isomorphism}

The Kasteleyn cokernel of $G^+$ is isomorphic to the Jacobian of $G$, i.e. $K(G^+)\cong \text{Jac(}G)$.

\end{theorem}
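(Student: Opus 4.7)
The plan is to rewrite $K(G^+)$ as the quotient $\Z^{E(G)}/\bigl(C(G)+C^*(G)\bigr)$, where $C(G), C^*(G) \subseteq \Z^{E(G)}$ are the cycle and cut sublattices of $G$, and then apply the classical identification of this quotient with $\text{Jac}(G)$.

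Fix a $q$-connected orientation of $G$ and build the Kasteleyn orientation of $G^+$ as in Section 2.3. The black vertices of $G^+$ are $V(G)\setminus\{q\}$ together with $V(G^\vee)\setminus\{q^*\}$, and the white vertices are the midpoints $v_e$, one per $e\in E(G)$. The column of the Kasteleyn matrix $K$ indexed by $e$ has at most four non-zero entries, coming from the four half-edges incident to $v_e$: two land in the ``primal block'' (the two endpoints of $e$) and two in the ``dual block'' (the two faces bordering $e$). A short sign check, using the conventions that positively oriented means white-to-black and that each dual edge is rotated $90^\circ$ counterclockwise from its primal, yields the block form
\[
K \;=\; \begin{pmatrix} \tilde N_G \\ \tilde N_{G^\vee} \end{pmatrix},
\]
where $\tilde N_G$ is the signed vertex-edge incidence matrix of $G$ with the $q$-row deleted, and $\tilde N_{G^\vee}$ is the analogous matrix for $G^\vee$ with the $q^*$-row deleted.

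Since the rows of a full signed incidence matrix sum to zero, deleting one row does not shrink the row span, so $\text{im}(K^T) = \text{im}(\tilde N_G^T)+\text{im}(\tilde N_{G^\vee}^T)= C^*(G)+C^*(G^\vee)$ inside $\Z^{E(G)}$. Planar duality, together with the compatible primal/dual orientation above, identifies $C^*(G^\vee)$ with $C(G)$ via the natural bijection $e\leftrightarrow e^*$, so $K(G^+) = \Z^{E(G)}/\bigl(C^*(G)+C(G)\bigr)$. The boundary map $\partial := N_G : \Z^{E(G)} \to \text{Div}^0(G)$ is surjective with kernel $C(G)$, hence induces an isomorphism $\Z^{E(G)}/C(G) \xrightarrow{\sim} \text{Div}^0(G)$, under which $C^*(G)=\text{im}(N_G^T)$ maps to $N_G N_G^T(\Z^{V(G)})=\Delta(\Z^{V(G)})=\text{Prin}(G)$. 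Consequently $\partial$ descends further to the desired isomorphism $K(G^+) \cong \text{Div}^0(G)/\text{Prin}(G) = \text{Jac}(G)$.

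The main obstacle is the sign bookkeeping in the second paragraph: one must check that the Kasteleyn signing arising from a $q$-connected orientation of $G$ really recovers, on the nose, the signed incidence matrices of $G$ and of $G^\vee$, and that the chosen orientation of $G^\vee$ is precisely the one making the duality identification $C^*(G^\vee)=C(G)$ hold as equal sublattices of $\Z^{E(G)}$ (rather than merely up to a sign flip). Once the block description of $K$ is in place, the rest is a routine presentation-of-cokernels argument.
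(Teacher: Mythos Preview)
Your argument is correct and is essentially the standard proof (due to Jacobson, which is what the paper cites rather than reproving). Note that the paper does not give its own proof of this theorem: it attributes the result to \cite{jacobson} and only records the explicit isomorphism $\phi:K(G^+)\to\text{Jac}(G)$, sending a weight $d$ on the white vertex $v_e$ to the divisor $d\cdot(\text{head}(e))-d\cdot(\text{tail}(e))$. This is exactly your boundary map $\partial=N_G$, so your construction recovers the paper's $\phi$ on the nose. Your block decomposition $K=\begin{pmatrix}\tilde N_G\\ \tilde N_{G^\vee}\end{pmatrix}$ is the substantive step that the paper omits, and it is precisely what makes $\phi$ well-defined: the dual block shows that $C(G)=C^*(G^\vee)$ lies in $\text{Prin}(G^+)$ and hence goes to $0$ in $\text{Jac}(G)$, while the primal block shows that the remaining relations $C^*(G)$ map onto $\text{Prin}(G)$. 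One small remark on conventions: the paper says white vertices index the \emph{columns} of the Kasteleyn matrix and then defines $\text{Prin}(G^+)$ as the ``column span'' inside $\Z^{\text{white}}$, which only makes literal sense because $G^+$ has equally many black and white vertices (by Euler's formula $|V(G)|-1+|F(G)|-1=|E(G)|$); your use of $\text{im}(K^T)$ is the cleaner formulation and yields the same group either way by Smith normal form.
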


In order to explicitly describe the isomorphism, first note that elements $a,b\in \text{Jac(}G)$ are equivalent if and only if one can be obtained from the other by a sequence of chip-firing moves.  A chip-firing move from a fixed vertex $v_0$ has the form
\[
\Delta v_0= - |N(v_0)|; \Delta v=1 \ \text{for}\  v\in N(v_0)
\]
where $N(v_0)$ denotes the neighborhood of $v_0$.  
It is easy to see that chip-firing equivalence is an equivalence relation on the divisors of $G$ and that the \text{deg}ree of the divisor will not change in a chip-firing move. 

\smallskip

The isomorphism $\phi: K(G^+)\to \text{Jac(}G)$ is defined as follows.  Let $\text{Div}(G^+)$ denote the set of linear equivalence classes of white vertices of $G^+$, where two divisors are said to be linearly equivalent if they differ by something in the column span of a Kasteleyn matrix of $G^+$.  

\smallskip

Let $k\in \text{Div}(G^+)$, and choose a representative for its equivalence class.  Denote the integer on a given white vertex $v_w$ as $d$.  Let $e$ denote the edge of $G$ associated with $v_w$, and place $d$ chips at the head of $e$ and $-d$ chips at the tail of $e$, where the heads and tails of each edge are determined by a fixed $q$-connected orientation on $G$.  Extend by linearity, and take the linear equivalence class of the resulting divisor on $G$ to be the image of $\phi(k)$.  The linear equivalence class of the resulting divisor is independent of the representative chosen for $k$, so $\phi$ is well-defined.  Then $\phi:\text{Div}(G^+) \to \text{Jac(}G)$ is a surjection whose kernel is exactly $\text{Prin}(G^+)$, which gives the desired isomorphism.


\section{Group action of the Kasteleyn cokernel on matchings}

In this section, we establish the following theorem.

\begin{theorem}

There is a canonical simply transitive group action of the Kasteleyn cokernel of $G^+$ on the set of perfect matchings of $G^+$.

\end{theorem}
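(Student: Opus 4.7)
The natural strategy is to transport the simply transitive action of $\text{Jac}(G)$ on $T(G)$---supplied by Bernardi's construction and shown to be reference-tree-independent by Yuen~\cite{chiho}---across two bijections already in hand. Explicitly, I would define the action of $[k]\in K(G^+)$ on $M\in M(G^+)$ as the composition
\[
K(G^+)\ \xrightarrow{\ \phi\ }\ \text{Jac}(G)\ \curvearrowright\ T(G)\ \xrightarrow{\ \text{Temperley}\ }\ M(G^+),
\]
where $\phi$ is the isomorphism of Theorem~\ref{isomorphism} and the rightmost arrow is the bijection described in Section~2.1. Because Bernardi's action is simply transitive on $T(G)$ and the outer maps are bijections, the resulting action on $M(G^+)$ is automatically simply transitive; the full content of the theorem therefore lies in the word \emph{canonical}.

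To establish canonicity I would check independence from each auxiliary choice in turn. The Kasteleyn signing used to define $K(G^+)$ is known to be irrelevant~\cite{kuperberg}. The $q$-connected orientation used to build $\phi$ can be replaced by any equivalent one: two such orientations differ by a sequence of directed cycle or cut reversals, and a direct calculation shows that the corresponding image divisors in $\text{Div}(G)$ differ by principal divisors, so $\phi$ is unchanged on linear equivalence classes. The reference spanning tree for Bernardi's construction drops out by Yuen~\cite{chiho}. This leaves the choice of root $q$, which is by far the most delicate point, since $\phi$, Bernardi's action, and the Temperley bijection all individually depend on $q$.

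The main obstacle is therefore showing that these three $q$-dependencies cancel out along the composition. I would compare the induced actions for two roots $q,q'$ by reducing, via connectedness of $G$, to the case where they are joined by a single edge $e$, and then tracking the local change in each of the three maps. Rerooting across $e$ flips the orientations along the fundamental cycles of $G$ through $e$, toggles the Temperley matching locally across $e$, and modifies the chip-placement rule in $\phi$ at the white vertex corresponding to $e$; I expect these three local changes to combine into a principal divisor in $K(G^+)$ and so act trivially on $M(G^+)$. The cleanest route, and the one I would aim for in the final write-up, is to distill this analysis into an intrinsic recipe for the $K(G^+)$-action directly on $M(G^+)$: given a representative divisor on the white vertices of a class $[k]$, act on $M$ by a sequence of toggles along alternating cycles of $M$ whose net effect on white-vertex incidences realizes that representative. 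Proving that such a toggle recipe is well-defined modulo the column span of a Kasteleyn matrix, and that it coincides with the composition above for any choice of $q$, is where the real work of the proof lies.
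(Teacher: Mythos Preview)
Your transport strategy---pull back Bernardi's torsor via $\phi$ and the Temperley bijection---is exactly the skeleton the paper uses, and simple transitivity does indeed follow immediately. Where you diverge is in what ``canonical'' is taken to mean. The paper explicitly declares the root vertex $q$ to be part of the structure of $G^+$ (it is needed already to construct $G^+$ from $G$), and only claims that the action is independent of the remaining auxiliary data: the reference matching $M_0$ and the associated Kasteleyn orientation. That independence is dispatched, as you anticipate, by appeal to Yuen's result. So the program you label ``the main obstacle''---cancelling the $q$-dependencies of $\phi$, Bernardi, and Temperley against one another---is not part of the theorem, and the local rerooting analysis you sketch is neither carried out nor needed. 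In fact it is not clear that such $q$-independence holds, and attempting to prove it would take you well beyond the paper's claim.

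The substantive content you are missing is the intrinsic description the paper gives before invoking the transport. Rather than defining the action abstractly and then hoping to ``distill'' a toggle recipe, the paper starts from such a recipe: for a fixed reference matching $M_0$, the map $\Psi_0:M(G^+)\to K(G^+)$ reads off a $\{0,1\}$-divisor on white vertices directly from the alternating cycles of $M_0\triangle M_i$, according to whether the two incident edges agree or disagree in orientation. The key lemma then identifies this combinatorial $\Psi_0$ with the edge-reversal map $O_0\mapsto O_i$ on orientations, which in turn shows that $\Psi_0$ completes the commuting square with Temperley, Bernardi, and $\phi$. Thus your proposed ``cleanest route'' is actually the paper's starting point, and the commutativity check---not $q$-independence---is where the work goes.
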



We will show that the group action depends only on the choice of root vertex $q$, which we consider to be part of the structure of $G^+$.  To describe the action, start by fixing a reference matching $M_0$ on a planar bipartite graph $G^+$, and fix the Kasteleyn orientation coming from $M_0$ via the corresponding $T_0\in T(G)$.  (Later, we will show that the group action is independent of the choice of $M_0$.)  Define the alternating cycles of $G^+$ (with respect to $M_0$) to be cycles whose edges alternate between edges $e\in M_0$ and $e\not\in M_0$.  For any matching $M_j$ in $M(G^+)$, the symmetric difference $M_0 \triangle M_j$ is some disjoint union of alternating cycles.    More generally, for any 2 matchings $M_i$ and $M_j$, the symmetric difference $M_i \triangle M_j$ is a disjoint union of cycles of $G^+$.  We define $L_{ij}:=M_i \triangle M_j$. 

We define a map
\[
\Psi_0: M(G^+) \to K(G^+)
\]
as follows.  For each white vertex $v$ in the support of $L_{0i}$, if the 2 edges in $L_{0i}$ incident to $v$ have the same orientation (that is, both are positively oriented or both are negatively oriented), then a $0$ is placed on $v$.  If the 2 edges have opposite orientations (that is, one positive and the other negative), then a 1 is placed on $v$.  For any $v$ not in the support of $L_{0i}$, a 0 is placed on $v$.  This gives a divisor on $G^+$; take $\Psi_0(M_i)$ to be the linear equivalence class of this divisor, which gives an element of $K(G^+)$.  Note that $\Psi_0(M_0)=0$.  
See Figures ~\ref{T_0}, ~\ref{M_0}, ~\ref{T_1}, ~\ref{M_1} and ~\ref{L_1} for an example of two spanning trees and the disjoint union of alternating cycles arising from the corresponding matchings on $G^+$.

\smallskip

$\Psi_0$ naturally extends to a map $\psi_0: M(G^+) \times M(G^+) \to K(G^+)$ by  letting $\psi_0(M_i,M_j) = \Psi_0(M_i)-\Psi_0(M_j)$.  Both $\Psi_0(M_i)$ and $\Psi_0(M_j)$ are elements of the Kasteleyn group, and the subtraction is performed in $K(G^+)$.  It is clear from the definition that $\psi_0$ defines a group action of $K(G^+)$ on $M(G^+)$.  We will prove that the action is simply transitive by proving that $\Psi_0$ is a bijection.  We will later prove that this action is independent of the reference data, i.e. the choice of $M_0$ and the corresponding Kasteleyn orientation, so in fact this group action is canonical.

\begin{theorem}

The map $\Psi_0: M(G^+) \to K(G^+)$ is a bijection between $K(G^+)$ and $M(G^+)$.  

\end{theorem}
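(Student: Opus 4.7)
The plan is to combine a cardinality argument with a comparison to the Bernardi bijection. First, I would observe that $|M(G^+)| = |K(G^+)|$: the Temperley bijection gives $|M(G^+)| = |T(G)|$, Kirchhoff's matrix-tree theorem gives $|T(G)| = |\text{Jac}(G)|$, and Theorem \ref{isomorphism} gives $|\text{Jac}(G)| = |K(G^+)|$. Since $\Psi_0$ is a map between finite sets of equal cardinality, it therefore suffices to prove that $\Psi_0$ is injective.

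To prove injectivity, my plan is to show that $\phi \circ \Psi_0$, where $\phi: K(G^+) \xrightarrow{\sim} \text{Jac}(G)$ is the isomorphism of Theorem \ref{isomorphism}, agrees with the composite bijection
\[
M(G^+) \xrightarrow{\text{Temperley}^{-1}} T(G) \xrightarrow{\text{Bernardi}} \text{Jac}(G),
\]
with $T_0$ taken as the reference spanning tree. Both sides send $M_0$ to $0$ (the Bernardi side by choice of reference, and $\phi \circ \Psi_0$ because $L_{00}$ is empty), so there is no constant shift to reconcile. Assuming this identification, $\phi \circ \Psi_0$ is a bijection, and hence so is $\Psi_0$.

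To verify the identification, fix $M_i \in M(G^+)$ with corresponding spanning tree $T_i$ and $q$-connected orientation $O_i$. The Bernardi image is represented by the divisor $\sum_{v \in V(G)} (\text{indeg}_{O_i}(v) - \text{indeg}_{O_0}(v)) \, v$, while $\phi(\Psi_0(M_i))$ is the sum, over all white vertices $v_w$ labeled $1$, of (head of $e_{v_w}$) $-$ (tail of $e_{v_w}$), with heads and tails determined by $O_0$. The plan is to decompose $L_{0i}$ into its alternating cycles in $G^+$ and verify, cycle by cycle, that each one contributes the corresponding indegree change on its $G$-vertices, up to a principal divisor on $G$.

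The hardest step will be the bookkeeping at half-edges of $G^\vee$: an alternating cycle in $G^+$ zigzags between half-edges of $G$ and half-edges of $G^\vee$, so the set of label-$1$ white vertices on a given cycle mixes both types, yet only the $G$-side should contribute to a divisor on $V(G)$ and any $G^\vee$-contributions must collapse into chip-firings at face-vertices of $G$. The essential inputs should be the Kasteleyn parity condition on positively oriented edges around each face of $G^+$, together with Proposition \ref{q-conn orientations}, which tells us that $O_0$ and $O_i$ are related by a finite sequence of directed cycle and cut reversals. This should reduce the per-cycle check to the model case of a single cycle or cut reversal, where the matching change in $G^+$, the induced labels, and the indegree shift on $V(G)$ can all be computed by direct inspection and matched modulo a chip-firing at the face-vertices enclosed by the reversal.
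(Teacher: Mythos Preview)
Your overall strategy---showing that $\Psi_0$ completes the commutative square with the Temperley bijection, the Bernardi bijection, and Jacobson's isomorphism $\phi$---is exactly the paper's approach, and your identification of the two divisors to compare is correct. Where you diverge is in the verification step, which you are making much harder than necessary. The check is purely local at each white vertex and requires no cycle decomposition of $L_{0i}$, no Kasteleyn parity argument around faces, and no reduction via Proposition~\ref{q-conn orientations} to single cycle or cut reversals. The key observation (this is the paper's Lemma~\ref{edge reversal}) is that a white vertex $v_w$ corresponds to a single edge $e$ of $G$, and the Kasteleyn orientation on \emph{all four} half-edges incident to $v_w$---the two halves of $e$ and the two halves of the dual edge $e^*$---is determined entirely by the orientation of $e$. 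Since the matched edge at $v_w$ in any $M_j$ is always positively oriented in the Kasteleyn orientation coming from $O_j$, the two edges of $L_{0i}$ at $v_w$ have opposite orientations in the fixed $O_0$-Kasteleyn orientation if and only if $e$ flips between $O_0$ and $O_i$. Thus $v_w$ receives label $1$ exactly when $e$ flips, and $\phi(\Psi_0(M_i))$ is on the nose the indegree-change divisor between $O_0$ and $O_i$.

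Your anticipated ``hardest step'' rests on a misconception: there is only one kind of white vertex, and the $G^\vee$ half-edges at $v_w$ carry no orientation information independent of $e$, so there is nothing to collapse into chip-firings at face vertices. Your proposed reduction to a single cycle or cut reversal is also shaky as written, since the intermediate orientations in such a sequence need not be $q$-connected and hence need not correspond to matchings at all, so you cannot interpolate through $M(G^+)$; fortunately that reduction is entirely unnecessary once you see the local argument above.
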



Given a matching $M_i\in M(G^+)$, denote as $T_i$ the corresponding spanning tree of $G$ under the Temperley bijection, and denote as $O_i$ the $q$-connected orientation of $G$ associated to $T_i$.  We will show that the map $\Psi_0$ makes the following diagram commute:

\medskip

\begin{tikzcd}[row sep = huge, column sep = huge]
T(G) \arrow[d,"Temperley"] \arrow[r, "Bernardi"] & \text{Jac(}G) \arrow[d, "\phi"]\\
M(G^+) \arrow[r, "\Psi_0"] & K(G^+)\\
\end{tikzcd}

\vspace{-.3in}

Recall that since $T(G)$ is in natural bijection with the set $\Or(G)$ of equivalence classes of orientations of $G$ and the Bernardi bijection factors through this set, it is equivalent to state that the map $\Psi_0$ makes the following diagram commute:

\medskip

\begin{tikzcd}[row sep = huge, column sep = huge]
\Or(G) \arrow[d,"Temperley"] \arrow[r, "Bernardi"] & \text{Jac(}G) \arrow[d, "\psi"]\\
M(G^+) \arrow[r, "\Psi_0"] & K(G^+)\\
\end{tikzcd}

\vspace{-.3in}

\begin{figure} [!htb] \includegraphics[scale=.5]{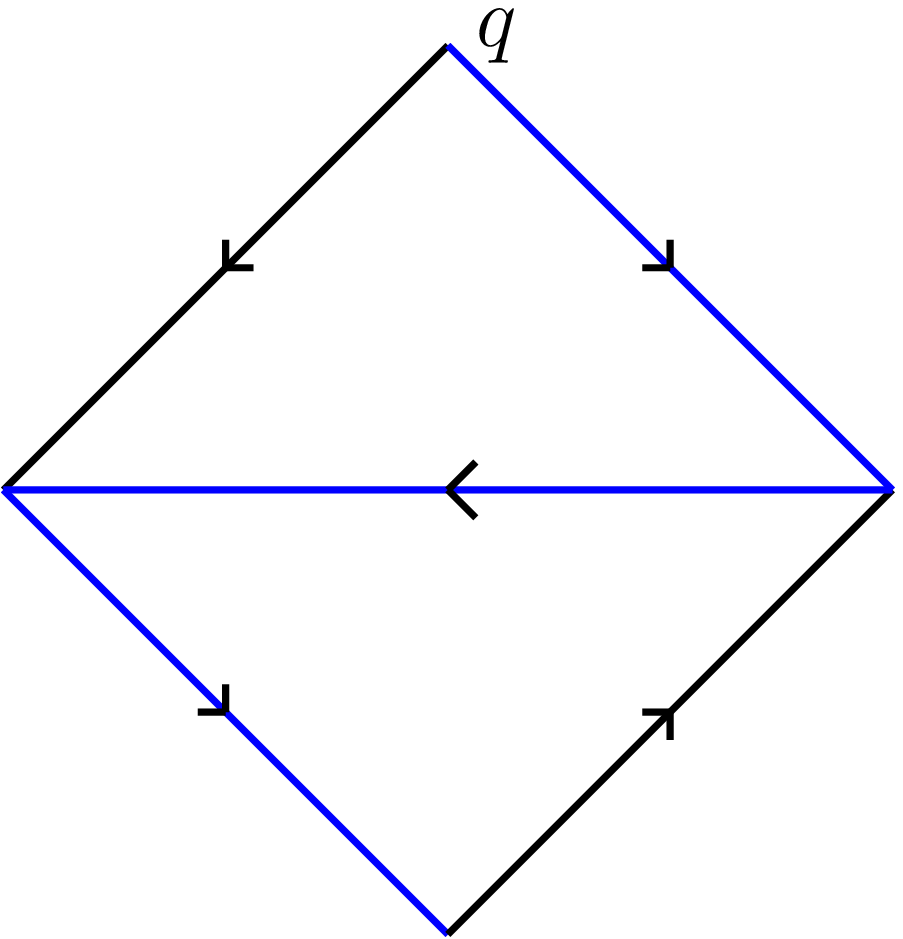} \caption{$T_1$} \label{T_1} \end{figure}
\begin{figure}[!htb] \includegraphics[scale=.5]{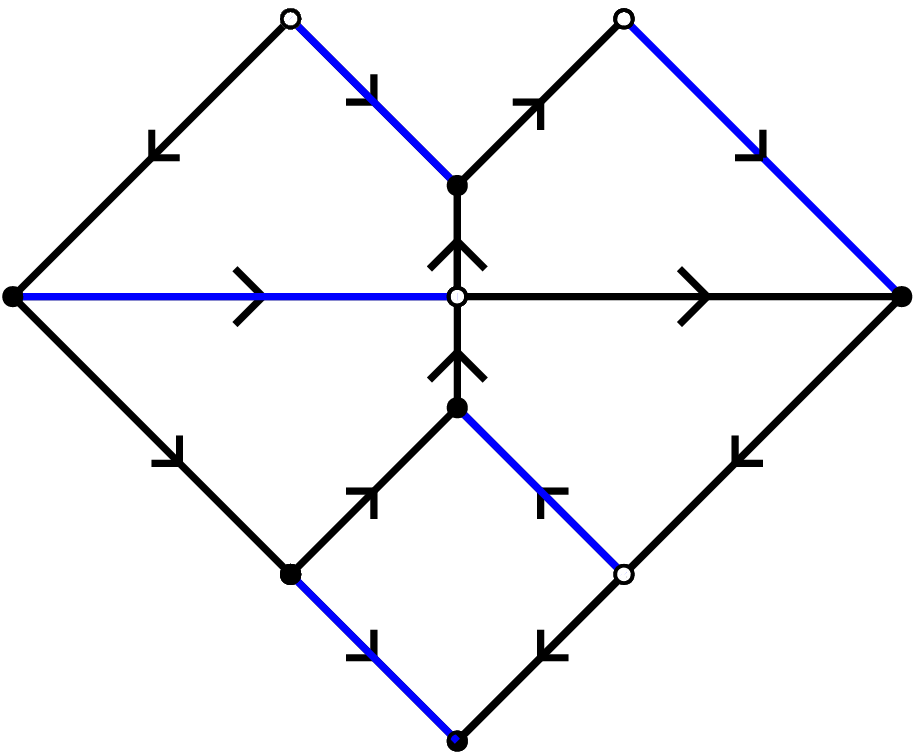} \caption{$M_1$} \label{M_1} \end{figure}
\begin{figure} [!htb] \includegraphics[scale=.5]{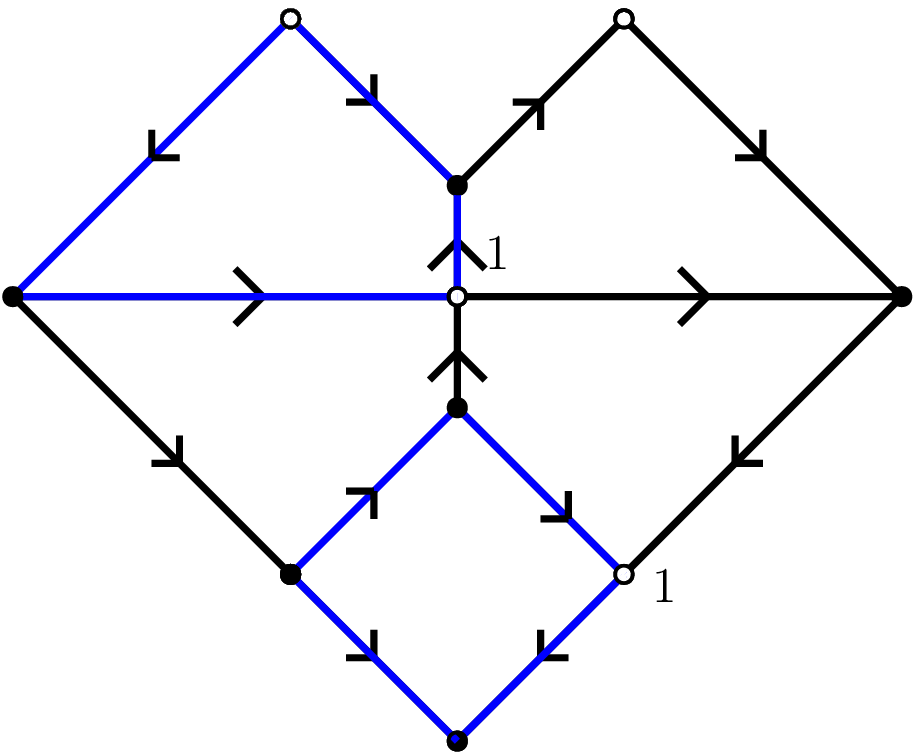} \caption{$L_{01}$} \label{L_1} \end{figure}

\begin{lemma}\label{edge reversal}

The map $\Psi_0$ is the same as the map induced by placing a 1 on each edge of $G$ which changes orientation between $O_0$ and $O_i$.  

\end{lemma}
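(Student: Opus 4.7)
\smallskip

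\textbf{Proof plan.} The plan is to prove the lemma by a case analysis on each white vertex $v_w$ of $G^+$, where $v_w$ corresponds to some edge $e$ of $G$. The vertex-by-vertex claim is that the coefficient that $\Psi_0(M_i)$ places on $v_w$ equals $1$ exactly when $O_0(e)\ne O_i(e)$.

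The key preparatory observation is that the $O_0$-induced Kasteleyn orientation of $G^+$ has a simple local picture at each white $v_w$: of the four half-edges meeting $v_w$ (the two halves of $e$, and the two halves of the dual edge $e^*$), exactly one half of $e$ is positive (white $\to$ black) and exactly one half of $e^*$ is positive. Moreover, because the convention for orienting $G^\vee$ is $90^\circ$ counterclockwise rotation from $G$, flipping $O_0(e)$ simultaneously swaps the positive and negative half-edges of both $e$ and $e^*$ in the induced orientation.

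With this in hand, I split on which of $T_0,T_i$ contains $e$. In the ``matched trees'' subcases $e\in T_0\cap T_i$ and $e\in T_0^c\cap T_i^c$, the matchings $M_0$ and $M_i$ both pick from the same pair of half-edges (the halves of $e$, respectively the halves of $e^*$); they pick the same one iff $O_0(e)=O_i(e)$, giving either $v_w\notin L_{0i}$ and coefficient $0$, or two oppositely oriented half-edges and coefficient $1$. In the ``switched trees'' subcases $e\in T_0\cap T_i^c$ and $e\in T_0^c\cap T_i$, the vertex $v_w$ always lies in $L_{0i}$; here $M_0$'s edge at $v_w$ is a positive half-edge of $e$ (resp.~$e^*$) in the $O_0$-Kasteleyn, while $M_i$'s edge at $v_w$, a half of $e^*$ (resp.~$e$) chosen positive in the $O_i$-Kasteleyn, is positive in the $O_0$-Kasteleyn iff $O_0(e)=O_i(e)$ by the key observation. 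In every subcase the coefficient matches the edge-flip indicator.

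The only genuine obstacle is bookkeeping: one must track carefully that $M_0$ is constructed using the $O_0$-Kasteleyn orientation, $M_i$ is constructed using the $O_i$-Kasteleyn orientation, but the recipe for $\Psi_0$ evaluates edge orientations in the fixed $O_0$-Kasteleyn orientation throughout. Once this distinction is pinned down and combined with the local observation above, each of the four subcases reduces to a one-line verification.
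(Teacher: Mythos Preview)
Your case analysis is correct and, in fact, more direct than the paper's own argument for this lemma. You prove the \emph{pointwise} equality of the two divisors on the white vertices: for each $v_w$ corresponding to an edge $e$ of $G$, the coefficient assigned by $\Psi_0$ is $1$ iff $O_0(e)\ne O_i(e)$. Your key local observation---that at $v_w$ exactly one half of $e$ and one half of $e^*$ are positive in the $O_0$-Kasteleyn orientation, and that flipping $O_0(e)$ swaps both pairs simultaneously---is the right engine, and the four subcases $e\in T_0\cap T_i$, $e\in T_0^c\cap T_i^c$, $e\in T_0\cap T_i^c$, $e\in T_0^c\cap T_i$ behave exactly as you describe. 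Your bookkeeping remark (that $M_i$ is built using the $O_i$-Kasteleyn orientation but $\Psi_0$ reads off signs in the $O_0$-Kasteleyn orientation) is precisely the subtlety that makes the ``switched trees'' subcases go through.

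By contrast, the paper's proof of the lemma does not argue vertex by vertex; it appeals to the isomorphism $\phi:K(G^+)\to\text{Jac}(G)$ and the observation that an edge flip changes the indegrees at the two endpoints by $\pm 1$. That argument really shows that the edge-flip indicator divisor, pushed through $\phi$, lands on the Bernardi image of $O_i$---i.e.\ it identifies the edge-flip map with $\phi^{-1}\circ\text{Bernardi}$ rather than directly with $\Psi_0$. The actual verification that $\Psi_0$ agrees with the edge-flip indicator is then carried out (somewhat tersely) inside the paper's proof of the subsequent theorem, via reasoning close to your case analysis. So your route is genuinely different in organization: you establish the lemma on its own terms as an equality of divisors, while the paper proves it only up to the ambient identifications and postpones the concrete check. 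Your approach is self-contained and does not require $\phi$ or the Bernardi map at all; the paper's approach buys a shorter lemma proof at the cost of entangling it with the commutative-diagram theorem that follows.
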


\begin{proof} The statement holds by the isomorphism $\phi:\text{Jac(}G) \to K(G^+)$, since flipping the orientation of an edge $e \in G$ decreases the in\text{deg}ree of one endpoint of $e$ while increasing the in\text{deg}ree of the other endpoint of $e$.   Therefore $\Psi$ coincides with the map $\Or(G) \to \text{Jac(}G) \to K(G^+)$ given by composing the Bernardi bijection $\Or(G) \to \text{Jac}(G)$ with Jacobson's bijection $\phi:\text{Jac(}G) \to K(G^+)$.  
\end{proof}

For an example, see Figures ~\ref{T_0}, ~\ref{T_1} and ~\ref{L_1}.  Note that the edges which have different orientations in $O_0$ and $O_1$ are exactly those which have 1's placed on them when $\Psi_0$ is applied to $L_1$.

\begin{theorem}\label{bijection}

The map $\Psi_0$ makes the diagrams above commute, and therefore produces a bijection between $M(G^+)$ and $K(G^+)$.

\end{theorem}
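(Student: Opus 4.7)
The plan is to deduce commutativity of the square essentially from Lemma \ref{edge reversal}, after which bijectivity of $\Psi_0$ falls out automatically: in a commutative square of maps where three sides are known to be bijections, the fourth must be as well. The three known bijections here are Temperley ($\Or(G)\to M(G^+)$), Bernardi ($\Or(G)\to \text{Jac}(G)$), and Jacobson's $\phi\colon K(G^+)\to\text{Jac}(G)$ from Theorem \ref{isomorphism}.

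To verify commutativity I would chase an equivalence class $O_i\in\Or(G)$ around the square both ways. Going down then right, Temperley sends $O_i$ to $M_i\in M(G^+)$, and by Lemma \ref{edge reversal} the element $\Psi_0(M_i)\in K(G^+)$ is represented by the divisor placing a $1$ on each white vertex $v_e$ whose underlying edge $e\in G$ is reversed between $O_0$ and $O_i$. Going right then down, Bernardi sends $O_i$ to the class of $D_{O_i}-D_{O_0}$ in $\text{Jac}(G)$, where $D_O(v):=\text{indeg}_O(v)-1$. Under Jacobson's $\phi$, each white vertex $v_e$ corresponds to $\text{head}_{O_0}(e)-\text{tail}_{O_0}(e)\in\text{Jac}(G)$; summing over the reversed edges gives $\sum_{e\text{ reversed}}(\text{head}_{O_0}(e)-\text{tail}_{O_0}(e))$, and since each reversal shifts indegree by $-1$ at the old head and $+1$ at the old tail, this sum is exactly $D_{O_i}-D_{O_0}$. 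The two paths therefore produce the same class in $\text{Jac}(G)$, equivalently in $K(G^+)$ after applying $\phi^{-1}$.

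The main obstacle has really been absorbed into Lemma \ref{edge reversal}: the nontrivial combinatorial statement there is that the ``opposite orientations at a white vertex of $L_{0i}$'' indicator and the ``edges of $G$ reversed between $O_0$ and $O_i$'' indicator define the same class in $K(G^+)$, modulo Kasteleyn chip-firing. Given that lemma, the remainder of the proof is bookkeeping: composing known bijections and matching conventions, in particular confirming that the Kasteleyn orientation on $G^+$ implicit in the definition of $\Psi_0$ (the one induced by $O_0$ via the Temperley construction) agrees with the one Jacobson uses in defining $\phi$. Since $\Psi_0=\phi^{-1}\circ\text{Bernardi}\circ\text{Temperley}^{-1}$ is then a composition of bijections, $\Psi_0$ is itself a bijection, completing the proof.
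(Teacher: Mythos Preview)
Your proposal is correct and follows essentially the same line as the paper. Both arguments reduce commutativity to the identification of $\Psi_0(M_i)$ with the ``$1$ on each reversed edge'' divisor (Lemma~\ref{edge reversal}), then deduce bijectivity from the other three sides of the square. The only cosmetic difference is that the paper's proof of the theorem re-derives the content of Lemma~\ref{edge reversal} on the spot---arguing via the fact that Temperley always selects positively oriented half-edges, so a white vertex of $L_{0i}$ carries a $1$ precisely when the corresponding edge of $G$ flips between $O_0$ and $O_i$---whereas you simply invoke the lemma and do the algebraic chase. (One small caution: check your sign in the computation $\sum_{e\text{ reversed}}(\text{head}_{O_0}(e)-\text{tail}_{O_0}(e))=D_{O_i}-D_{O_0}$; a reversal lowers indegree at the old head, so the two sides differ by a sign, though this is harmless for commutativity once conventions are fixed consistently.)
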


\begin{proof}

Consider the Kasteleyn orientation of $G^+$ created from $O_0(G)$, as described in Section 2.1.  Under the Temperley bijection, each edge taken in a matching $M_i$ is 
positively oriented in the Kasteleyn orientation of $G^+$ arising from $O_i$.  Suppose a 1 is placed on a vertex $v$ in $L_i$.  Then the orientation of that edge is different in $O_0$ and $O_i$, since the Temperley bijection always picks up positively oriented edges.  Now suppose that there is a 0 on some vertex $v$ in $L_i$.  
Then the orientation of that edge is the same in $O_0$ and $O_i$.  Since it is in the symmetric difference $M_0 \Delta M_i$, the edge was either in $T_0$ but not in $T_i$, or vice versa, but its orientation remained the same in the two corresponding $q$-connected orientations since the same half-edges are still positively oriented.


Therefore, this bijection between $K(G^+)$ and $M(G^+)$ makes the diagram commute.  Since the other 3 arrows in the diagram are all bijections, $\Psi_0$ is as well.  

\end{proof}

See Figures ~\ref{T_0}, ~\ref{T_1}, and ~\ref{L_1} for an illustration of this statement.  The central edge and bottom right edge are the ones whose orientations are reversed between the orientations of $T_0$ and $T_1$, and the white vertices in $G^+$ corresponding to those edges are exactly the vertices which will have 1's placed on them under the map $\Psi_0$.  So in this example, the map $\Psi_0$ does in fact complete the commutative diagram.

\smallskip

Last, we show that the induced group action $\psi_0$ is independent of the reference matching $M_0$.  Suppose that some $M_i$ is used as the reference matching instead of $M_0$, so the group action $\psi_i$ is induced by the map $\Psi_i$ sending $M_j \to L_{ij} \to K(G^+)$, and the map $L_{ij} \to K(G^+)$ is defined with respect to the Kasteleyn orientation arising from $M_i$ via the Temperley bijection.  




The action described of $K(G^+)$ on $M(G^+)$ is equivalent to the action of break divisors on spanning trees of $G$, and this action is canonical (see ~\cite{chiho}), i.e., independent of the reference spanning tree.  Therefore the action $\psi_0$ is independent of $M_0$, and in fact depends only on the choice of $q$, which we consider to be part of the data of $G^+$ as a planar graph.  


Therefore $\psi_0=\psi_i$, so this defines a canonical group action, which we denote $\psi$, of $K(G^+)$ on $M(G^+)$.  

\smallskip

We note that this algorithm does not extend to graphs not of the form $G^+$.  It would be interesting to know whether a similar algorithm exists for more general graphs.

\section{acknowledments}

The author would like to thank Matt Baker for suggesting the problem and for many helpful conversations and Marcel Celaya for helpful feedback on an earlier version of the paper.  

\frenchspacing

\end{document}